\newcommand{\beq}{\begin{equation*}}
\newcommand{\eeq}{\end{equation*}}
\newcommand{\bthm}{\begin{theorem}}
\newcommand{\ethm}{\end{theorem}}
\newcommand{\blm}{\begin{lemma}}
\newcommand{\elm}{\end{lemma}}
\newcommand{\Z}{\mathbb{Z}}
\newcommand{\C}{\mathbb{C}}
\newtheorem{theorem}{Theorem}[section]
\newtheorem{corollary}[theorem]{Corollary}
\newtheorem{lemma}[theorem]{Lemma}
\newtheorem{problem}[theorem]{Problem}
\newtheorem{proposition}[theorem]{Proposition}
\newtheorem{remark}[theorem]{Remark}
\newcommand{\den}{\mathrm{den}}
\title{\Large\bf Free circle actions on $(n-1)$-connected $(2n+1)$-manifolds}
\author{Yi Jiang \and Yang Su }
\newcommand{\Addresses}{{
  \bigskip
  \footnotesize

  \textsc{Yi Jiang}, \textsc{Academy for Multidisciplinary Studies, Beijing National Center for Applied Mathematics, Capital Normal
  University, Beijing, 100048, China}\par\nopagebreak
 \texttt{jiangyi@cnu.edu.cn}
  \medskip

  \textsc{Yang Su}, \textsc{School of Mathematics and Systems Science, Chinese Academy of Sciences, Beijing, 100190, China, AND 
  School of Mathematical Sciences, University of Chinese Academy of Sciences, Beijing, 100049, China}\par\nopagebreak
  \texttt{suyang@math.ac.cn}
  \medskip
}}
\date{}
\begin{document}
\maketitle

\begin{abstract}
In this paper, we determine those $(n-1)$-connected $(2n+1)$-manifolds with torsion free homology that admit free circle actions up to almost diffeomorphism, provided that $n\equiv5,7 \mod 8$.
\end{abstract}

\section{Introduction}\label{se:Introduction}
In this paper, all manifolds under consideration are smooth, closed and oriented. We are interested in the problem \emph{when does a manifold $M$ admit a free smooth action by the unit circle $S^1$} (free circle action for short in the sequel). 
There are studies of this problem for certain families of manifolds by various authors. For instance, when $M$ is a homotopy sphere, this problem was studied in  \cite{H66,L68,MY68,S71}; 
when $M$ is an $(n-1)$-connected $(2n+1)$-manifold, this problem has been solved for $n=2$ (\cite{DL05}) and $n=3$ (\cite{J14}). For further examples see \cite{GL71,L72,D22,GR23}.

In this note we consider this problem for $(n-1)$-connected $(2n+1)$-manifolds with torsion free homology when $n >3$. The classification of $(n-1)$-connected $(2n+1)$-manifolds with torsion free homology up to almost diffeomorphism was obtained by the work of several authors. Recall that two $n$-manifolds $M_1$ and $M_2$  are \emph{almost diffeomorphic} if there is a homotopy $n$-sphere $\Sigma$ such that the connected sum $M_1\#\Sigma$ is diffeomorphic to $M_2$ (cf.\cite[p.223]{CR17}).

\begin{theorem}(\cite[Theorem 7]{W67}, \cite[Theorem 3]{Wi72}, \cite[Theorem 1.10]{SZ22}, \cite[Theorem B]{C02})\label{classification}
Let $M$ be an $(n-1)$-connected $(2n+1)$-manifold with torsion free homology. When $n \equiv 5 \pmod 8$,  $M$ is almost diffeomorphic to the connected sum $\#_r(S^n\times S^{n+1})$ of $r$ copies of $S^n\times S^{n+1}$ with $r > 0$. When $n \equiv 7 \pmod 8$, $M$ is almost diffeomorphic to $\#_{r}(S^n\times S^{n+1}) \# X_{l(M)}$ with $r\geq 0$.
\end{theorem}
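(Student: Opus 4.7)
The plan is to derive Theorem~\ref{classification} from the almost-diffeomorphism classification of $(n-1)$-connected $(2n+1)$-manifolds worked out in \cite{W67, Wi72, C02, SZ22}. To such an $M$ one associates an algebraic invariant system consisting of $H_n(M)$, $H_{n+1}(M)$, the linking form on the torsion subgroup of $H_n(M)$, and a tangential invariant recording the classifying maps of the (stable) normal bundles of embedded spheres representing homology bases. The cited works assert that this data is a complete and realizable invariant system up to almost diffeomorphism. When $M$ has torsion-free homology the linking form is trivial; only the common rank $r := \mathrm{rank}\, H_n(M) = \mathrm{rank}\, H_{n+1}(M)$ (the latter equality following from Poincar\'e duality and $(n-1)$-connectedness) and the tangential invariant survive. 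The tangential data is controlled by $\pi_{n-1}(SO)$ for $n$-sphere generators and $\pi_n(SO)$ for $(n+1)$-sphere generators.

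By Bott periodicity, $\pi_{n-1}(SO) = 0$ for both $n \equiv 5, 7 \pmod 8$, while $\pi_n(SO) = 0$ for $n \equiv 5 \pmod 8$ and $\pi_n(SO) \cong \mathbb{Z}$ for $n \equiv 7 \pmod 8$. In the first case all tangential invariants vanish, so $M$ shares its invariants with $\#_r(S^n \times S^{n+1})$ and is almost diffeomorphic to it; the restriction $r>0$ merely excludes the homotopy-sphere case (recovered by the convention $\#_0(S^n\times S^{n+1}) = S^{2n+1}$). In the second case the remaining invariant is a homomorphism $H_{n+1}(M) \to \mathbb{Z}$ whose image is cyclic, generated by a non-negative integer $l(M)$; choose a basis of $H_{n+1}(M)$ so that the invariant takes value $l(M)$ on one generator and $0$ on the remaining $r$ generators. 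The trivial generators are accommodated by $r$ copies of $S^n \times S^{n+1}$ and the nontrivial generator by a standard model $X_{l(M)}$ (an $S^n$-bundle over $S^{n+1}$, or a related explicit manifold realizing bundle invariant $l(M)$), and the classification then identifies $M$ with the asserted connected sum.

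The main obstacle is not this case-by-case analysis but the underlying classification itself: verifying that the tuple (homology, linking form, tangential invariant) is a complete and realizable invariant set up to almost diffeomorphism. This is the substance of the four cited papers and relies on Wall's surgery-theoretic framework together with the later refinements of Wilkens, Crowley, and Schmitt--Zhang, which are needed to isolate the ambiguity coming from exotic spheres and to treat the exceptional dimension ranges in a uniform way. Once this classification input is granted, the passage from $n \pmod 8$ to the stated building blocks via Bott periodicity and the explicit realization of $X_l$ is routine.
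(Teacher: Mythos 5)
Your proposal is correct in substance and matches the paper's treatment: the paper does not reprove this theorem but quotes it from Wall, Wilkens, Crowley and Schmitt--Zhang, and your argument likewise defers the complete-invariant classification to those references, adding only the routine Bott-periodicity bookkeeping ($\pi_{n-1}(SO)=0$ and $\pi_n(SO)=0$ or $\mathbb{Z}$) and the basis change putting the tangential invariant in the form $(l,0,\dots,0)$. The only caveat, which the paper handles in the discussion and Appendix rather than in the theorem itself, is the normalization relating the generator of the image of the tangential invariant $\beta(M)$ to $l(M)$ via the factor $(\frac{n-1}{2})!$ (doubled when $n=7$ because the clutching functions lie in the index-two image of $\pi_n(SO(n))\to\pi_n(SO)$).
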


Here we explain the notations in the above theorem. For $n\equiv 7 \mod 8$, let $l(M)$ be the divisibility of the $\frac{n+1}{4}$-th Pontrjagin class $p_{\frac{n+1}{4}}(M)$, i.e., $p_{\frac{n+1}{4}}(M)$ is $l(M)$ multiple of a primitive element with $l(M) \ge 0$. It is shown by Kervaire \cite[Lemma 1.1]{K59} that $p_{\frac{n+1}{4}}(M)=(\frac{n-1}{2})!\beta(M)$, where $\beta(M)\in H^{n+1}(M)$ is the only obstruction to trivializing the stable tangent bundle of $M$. Therefore 
$l(M)$ is always divisible by $(\frac{n-1}{2})!$. Moreover, for $n>7$ and for any natural number $l$ divisible by $(\frac{n-1}{2})!$, there exists an $(n-1)$-connected $(2n+1)$-manifold $X_l$, unique up to almost diffeomorphism, such that  $H^{n+1}(X_l) \cong \Z$ and $l(X_l)=l$. The manifold $X_l$ can be taken as the linear $S^n$-bundle over $S^{n+1}$ with Euler class $0$ and the $\frac{n+1}{4}$-th Pontryagin class $l$ multiple of a generator of $H^{n+1}(S^{n+1})$.  For example, $X_0 = S^n \times S^{n+1}$.The case $n=7$ is similar except  that the obstruction $\beta(M)$ is an even multiple of a primitive element, due to the existence of Hopf invariant $1$ elements in this dimension,  and the manifold $X_l$ exists for each natural number $l$ divisible by $2 \cdot (\frac{7-1}{2})!=12$. See the Appendix for more details.

As an application of this classification, we determine those $(n-1)$-connected $(2n+1)$-manifolds with torsion free homology that admit free circle actions up to almost diffeomorphism, provided that $n\equiv5,7 \mod 8$. 

\begin{theorem}\label{mainthn=5mod8}
Let $M$ be an $(n-1)$-connected $(2n+1)$-manifold where $H_n(M)$ is free and $n\equiv 5 \mod 8$. Then there exists a homotopy sphere $\Sigma$ such that $M\#\Sigma$ admits a free circle action.
\end{theorem}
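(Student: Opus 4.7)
By Theorem~\ref{classification}, $M$ is almost diffeomorphic to $\#_r(S^n \times S^{n+1})$ for some $r \geq 1$, so it suffices to produce, for each such $r$, a free circle action on a manifold almost diffeomorphic to $\#_r(S^n \times S^{n+1})$. The plan is to realize such a manifold as the total space of a principal $S^1$-bundle over a carefully chosen closed oriented $2n$-manifold $N_r$, with the bundle supplying the free action.

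Since $n \equiv 5 \pmod 8$ is odd, write $n = 2k+1$ with $k \geq 2$. The Hopf fibration $S^1 \to S^n \to \mathbb{CP}^k$ is the key building block: crossed with $S^{n+1}$, it realizes $S^n \times S^{n+1}$ as a principal $S^1$-bundle over $\mathbb{CP}^k \times S^{n+1}$ with Euler class the K\"ahler generator. Splitting by parity, I would set
\[
N_r = \begin{cases} \mathbb{CP}^n \,\#\, \#_m (S^n \times S^n), & r = 2m, \\ (\mathbb{CP}^k \times S^{n+1}) \,\#\, \#_m(S^n \times S^n), & r = 2m+1, \end{cases}
\]
take $e \in H^2(N_r;\mathbb{Z}) \cong \mathbb{Z}$ to be the generator coming from the first summand, and let $\pi_r \colon E_r \to N_r$ be the principal $S^1$-bundle with Euler class $e$. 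The free $S^1$-action on $E_r$ is the candidate.

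To identify $E_r$ with $\#_r(S^n \times S^{n+1})$ up to almost diffeomorphism, I would compute $H^*(E_r;\mathbb{Z})$ via the Gysin sequence
\[
\cdots \to H^{i-2}(N_r) \xrightarrow{\cup e} H^{i}(N_r) \to H^{i}(E_r) \to H^{i-1}(N_r) \xrightarrow{\cup e} H^{i+1}(N_r) \to \cdots .
\]
The essential observation is that in a connected sum $A \,\#\, B$, cohomology classes supported on $A$ cup trivially with those supported on $B$; hence $e$ annihilates every class in $H^n(N_r)$ arising from an $S^n \times S^n$-summand, while on the $\mathbb{CP}^n$ or $\mathbb{CP}^k \times S^{n+1}$ summand $\cup e$ acts as the Lefschetz-type iso $\omega^j \mapsto \omega^{j+1}$, vanishing only where $\omega$ reaches its top power (in the odd case this is precisely the transition $H^{n-1} \to H^{n+1}$, where $\omega^{k+1} = 0$). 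Routine bookkeeping through the Gysin sequence then yields $H^i(E_r) \cong H^i(\#_r(S^n \times S^{n+1}))$ in every degree; in particular all groups are free, and $H^n(E_r) \cong H^{n+1}(E_r) \cong \mathbb{Z}^r$. Simple-connectedness of $E_r$ follows from $H_2(N_r) \cong \mathbb{Z}$ being generated by a class pairing to $1$ with $e$, forcing the connecting map $\pi_2(N_r) \to \pi_1(S^1)$ to be an isomorphism, and Hurewicz then upgrades this to $(n-1)$-connectedness. Applying Theorem~\ref{classification} to $E_r$ identifies it with $\#_r(S^n \times S^{n+1})$ up to connected sum with a homotopy sphere; transporting the free $S^1$-action along this almost diffeomorphism produces a free circle action on some $M \,\#\, \Sigma$.

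The principal technical step is the Gysin-sequence computation, specifically tracking how $\cup e$ interacts with the connected-sum decomposition of $N_r$ at the transition degree where $\omega^{k+1} = 0$; I anticipate this to be a routine if tedious bookkeeping exercise rather than a deep obstacle, since the two cohomology rings entering each $N_r$ are standard and the relevant cup products are well understood.
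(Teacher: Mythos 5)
Your proposal is correct, but it takes a genuinely different route from the paper. The paper's proof is purely constructive: it starts from the same building blocks you use (the Hopf action on $S^{2n+1}$ over $\C P^n$ and the Hopf action crossed with $S^{n+1}$, realizing $S^n\times S^{n+1}$ over $\C P^{\frac{n-1}{2}}\times S^{n+1}$) and then applies Duan's theorem on $S^1$-connected sums (Lemma \ref{lm:DuanThB}, via Proposition \ref{Lm:connected sum admits circle action} and Corollary \ref{Co:circle action on exotic sum}) to conclude that taking the $S^1$-connected sum with $S^1\times S^n\times S^n$ --- i.e.\ connected sum with $S^n\times S^n$ at the level of orbit spaces --- changes the total space by connected sum with $\#_2(S^n\times S^{n+1})$, \emph{up to diffeomorphism}. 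You build essentially the same circle bundles over $\C P^n\,\#\,\#_m(S^n\times S^n)$ and $(\C P^{\frac{n-1}{2}}\times S^{n+1})\,\#\,\#_m(S^n\times S^n)$, but identify the total space differently: you compute its cohomology with the Gysin sequence (this computation is correct; in fact the products $e\cup x$ for $x$ an $S^n\times S^n$-class land in $H^{n+2}(N_r)=0$, so that part is automatic, and the only genuine transition is $\omega^{k}\mapsto\omega^{k+1}=0$ in the odd case, exactly as you say), conclude that $E_r$ is $(n-1)$-connected with $H_n(E_r)\cong\Z^r$ torsion free, and then invoke the classification Theorem \ref{classification} a second time to identify $E_r$ with $\#_r(S^n\times S^{n+1})$ up to almost diffeomorphism; your last step, transporting the action to some $M\#\Sigma$, is fine because almost diffeomorphism is an equivalence relation thanks to the group structure on homotopy spheres. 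What each approach buys: the paper's route produces a free action on the standard $\#_r(S^n\times S^{n+1})$ itself and never re-uses the (hard) classification theorem, whereas your route is computationally self-contained (only the Gysin sequence) but controls the smooth structure of the total space only modulo homotopy spheres --- which is all the statement requires --- and leans on Theorem \ref{classification} twice. It is worth noting that your strategy is essentially the one the paper deploys for the harder case $n\equiv 7 \pmod 8$, namely the converse direction of Lemma \ref{Lm:cohomology of orbit} combined with a realization step and the classification, in the proof of Theorem \ref{mainthn=7mod8}.
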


Recall that the Bernoulli numbers $B_1, B_2, \cdots$ are  the coefficients in the expansion
$$\frac{z}{e^z-1}=1-\frac{z}{2}+\frac{B_1}{2!}z^2-\frac{B_2}{4!}z^4+\frac{B_3}{6!}z^6-\cdots,$$
(cf.~\cite[Appendix B]{MS74}). For any rational number $r$ let $\den(r)$ denote the denominator of $r$ expressed as a fraction in lowest terms.

\begin{theorem}\label{mainthn=7mod8}
Let $M$ be an $(n-1)$-connected $(2n+1)$-manifold where $H_n(M)$ is free and $n\equiv 7 \mod 8$. Then there exists a homotopy sphere $\Sigma$ such that $M\#\Sigma$ admits a free circle action if and only if one of the following conditions holds:
\begin{enumerate}
  \item[(1)] $M$ is almost diffeomorphic to $\#_{2r}(S^n\times S^{n+1})$ for some nonnegative integer $r$;
  \item[(2)] $M$ is almost diffeomorphic to $\#_{2r}(S^n\times S^{n+1})\# X_l$ for some nonnegative integer $r$ and some nonnegative integer $l$ divisible by $(\frac{n-1}{2})!\den(\frac{B_{(n+1)/4}}{n+1})$.
\end{enumerate}
\end{theorem}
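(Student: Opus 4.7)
The plan is to study the quotient $B := W/S^1$ of a hypothetical free circle action on $W := M \# \Sigma$ and to match cohomological and Pontryagin invariants of $B$ with those of $M$; necessity and sufficiency will be treated in parallel. I would first establish the structure of $B$. The long exact homotopy sequence of the fibration $S^1 \to W \to B$, combined with the $(n-1)$-connectedness of $W$, gives $\pi_1(B) = 0$, $\pi_2(B) \cong \Z$, and $\pi_i(B) = 0$ for $3 \le i \le n-1$. The Gysin sequence then identifies $H^*(B)$ in degrees $\le n-1$ with the truncated polynomial ring $\Z[e]/(e^{(n+1)/2})$, where $e \in H^2(B)$ is the Euler class; Poincar\'e duality on the $2n$-manifold $B$, together with further Gysin analysis, describes $H^n(B)$ and $H^{n+1}(B)$ in terms of $H^n(W) \cong H^{n+1}(W)$ and shows that the kernel of $\pi^* \colon H^{n+1}(B) \to H^{n+1}(W)$ is generated by $e^{(n+1)/2}$.

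For necessity, the splitting $TW \cong \pi^* TB \oplus \mathbb{R}$ yields $p_{(n+1)/4}(W) = \pi^* p_{(n+1)/4}(B)$, so $l(M)$ equals the divisibility of $\pi^* p_{(n+1)/4}(B)$ in $H^{n+1}(W)$. The factor $(\frac{n-1}{2})!$ is already present by Kervaire's relation $p_{(n+1)/4}(M) = (\frac{n-1}{2})!\beta(M)$. The Bernoulli denominator then arises from an integrality argument applied to the characteristic number $\langle p_{(n+1)/4}(B) \cdot e^{(n-1)/2}, [B] \rangle$: interpreting this as a twisted Dirac index on $B$ (equivalently, an $\hat A$-genus on the closed spin $(2n+2)$-manifold $P(L \oplus \underline{\C})$, where $L$ is the complex line bundle with $c_1(L) = e$) and invoking the Bernoulli-number expansion of the $\hat A$-class forces $(\frac{n-1}{2})!\den(B_{(n+1)/4}/(n+1)) \mid l(M)$. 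The parity condition in case (2) will follow from a compatibility of the intersection form on $H^n(B)$ with cup product by $e$.

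For sufficiency, I would construct $B$ and the principal $S^1$-bundle explicitly. In case (1), start from $B_0 := \C P^{(n-1)/2} \times S^{n+1}$ and the bundle pulled back from the Hopf bundle $S^n \to \C P^{(n-1)/2}$ on the first factor (whose total space is $S^n \times S^{n+1}$), then iterate by equivariant connected sum along free orbits to realize $\#_{2r}(S^n \times S^{n+1})$. In case (2), take $B$ to be the projectivization of a complex $\tfrac{n+1}{2}$-plane bundle $\xi$ over $S^{n+1}$, connect-summed with copies of $B_0$; such $\xi$ are classified by $\pi_n(U(\tfrac{n+1}{2})) \cong \Z$, and the divisibility hypothesis on $l$ is exactly what allows $\xi$ to be chosen so that the total space of the associated circle bundle is almost diffeomorphic to $\#_{2r}(S^n \times S^{n+1}) \# X_l$, after appeal to Theorem~\ref{classification}.

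The hardest step will be matching the Bernoulli divisibility sharply: on the necessity side, showing that the divisibility of $\pi^* p_{(n+1)/4}(B)$ is indeed bounded below by $(\frac{n-1}{2})!\den(B_{(n+1)/4}/(n+1))$, and on the sufficiency side, realizing a plane bundle $\xi$ over $S^{n+1}$ whose associated total space achieves exactly this $l$. Both computations will proceed through Bott periodicity, the image of the $J$-homomorphism in $\pi_{n+1}(BU)$, and the Chern-character/Bernoulli-number dictionary, which must be combined so that the two bounds agree.
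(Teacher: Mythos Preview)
Your necessity argument is essentially the paper's: the Gysin analysis of the orbit space and the twisted $\hat A$-integrality giving the Bernoulli denominator are exactly Lemmas~\ref{Lm:cohomology of orbit} and~\ref{Lm: Restriction of Pontrjagin class}. One small slip: in case~(1) you start from $B_0=\C P^{(n-1)/2}\times S^{n+1}$, but the circle bundle over this has $b_n=1$, so equivariant connected sums along orbits (each adding $2$ to $b_n$) only reach odd Betti numbers. For the even case you must start from the Hopf bundle $S^{2n+1}\to\C P^n$ (this is what the paper does).

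The genuine gap is your sufficiency construction in case~(2). Taking $B=P(\xi)$ for a complex $\tfrac{n+1}{2}$-plane bundle $\xi$ over $S^{n+1}$ cannot produce any $l\ne 0$. Write $n=4k-1$. In $H^*(P(\xi))$ the Leray--Hirsch relation is $e^{2k}+\pi^*c_{2k}(\xi)=0$, so the cup-product map $-\cup e\colon H^{4k-2}(P(\xi))\to H^{4k}(P(\xi))$ is multiplication by $c_{2k}(\xi)$. By your own Gysin analysis (or Lemma~\ref{Lm:cohomology of orbit}), the total space $S(\xi)$ has torsion-free $H_n$ only when this map is zero or an isomorphism; since $(2k-1)!\mid c_{2k}(\xi)$, for $k\ge 2$ the only possibility is $c_{2k}(\xi)=0$. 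But rank-$2k$ complex bundles over $S^{4k}$ are already stable, and $\pi_{4k-1}(U(2k))\cong\pi_{4k-1}(U)\cong\Z$ is detected by $c_{2k}$; hence $c_{2k}(\xi)=0$ forces $\xi$ to be trivial, $P(\xi)\cong\C P^{2k-1}\times S^{4k}$, and $l=0$.

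The paper's sufficiency argument (Lemma~\ref{Lm:realization}) is genuinely different and uses surgery theory. One chooses a \emph{stable} real bundle $\xi$ over $S^{4k}$ lying in $\ker J\subset\pi_{4k}(BO)$ with prescribed $p_k$ (this is where the factor $\den(B_k/4k)$ enters, via Adams' determination of $\mathrm{im}\,J$), forms the associated degree-one normal map $X\to S^{4k}$, and then takes the product with $\C P^{2k-1}$: the product formula sends the obstruction into $L_{8k-2}(\Z)\cong\Z/2$ multiplied by $\chi(\C P^{2k-1})=2k$, so it vanishes. Surgery then yields a manifold $Y\simeq\C P^{2k-1}\times S^{4k}$ with $p_k(Y)$ of the prescribed divisibility, and the circle bundle over $Y$ realizes $X_l$. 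Your projective-bundle construction does not access these ``fake'' $\C P^{2k-1}\times S^{4k}$'s, and some surgery or smoothing-theoretic input of this kind is unavoidable here.
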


\begin{remark}
In the above theorem, condition (1) is equivalent to that the $n$-th Betti number $b_n(M)$ of $M$ is even and $l(M)=0$; condition (2) is equivalent to that $b_n(M)$ is odd and $l(M)$ is divisible by $(\frac{n-1}{2})!\den(\frac{B_{(n+1)/4}}{n+1})$.
\end{remark}

In Section \ref{se:case n=5mod8} we prove Theorem \ref{mainthn=5mod8} by a direct geometric construction.  To prove Theorem \ref{mainthn=7mod8}, one observes  that a manifold $M$ admits a free circle action if and only if it is the total space of an $S^1$-bundle over a manifold $N$, and $N$ is the orbit space of the action. The key ingredient in the proof is the analysis of the topological invariants of the orbit space. In Section \ref{se:coh of orbit} and Section \ref{se:P-class of orbit}
 we study the cohomology ring and the Pontrjagin class of the orbit space, respectively. Finally we prove Theorem \ref{mainthn=7mod8}  in Section \ref{se:case n=7mod8}. 
 
\subsection*{Acknowledgements}
The authors would like to thank Haibao Duan for bringing the topic to their attention, and thank Matthias Kreck for helpful conversations. Y. Jiang is partially supported by NSFC 11801298 and NSFC 12371070. Y. Su is partially supported by NSFC 12071462.

\section{A construction of free circle actions}\label{se:case n=5mod8}
In this section we recall a construction of free circle actions and prove Theorem \ref{mainthn=5mod8}. 

Given two $m$-manifolds $N_1$ and $N_2$, the connected sum $N_1\# N_2$ is constructed as follows (c.f.~\cite[Section 2]{MK63}). Taking embeddings
$$i_k:D^m\to N_k, k=1,2$$
such that $i_1$ preserves the orientation and $i_2$ reserves the orientation. We obtained $N_1\# N_2$ from the disjoint sum $N_1\backslash i_1(0)+N_2\backslash i_2(0)$ by identifying $i_1(tu)$ with $i_2((1-t)u)$ for each unit vector $u\in S^{m-1}$ and each $0<t<1$. Now given oriented $S^1$-bundles $E_k\to N_k,k=1,2.$ A new $S^1$-bundle over $N_1\# N_2$ is constructed as follows. Taking embeddings $j_k:S^1\times D^m\to E_k, k=1,2$ such that $j_k$ covers $i_k$. The \emph{$S^1$-connected sum} $E_1\#_{S^1}E_2$ is an $(m+1)$-manifold obtained from the disjoint sum $E_1\backslash j_1(S^1\times 0)+E_2\backslash j_2(S^1\times0)$ by identifying $j_1(x,tu)$ with $j_2(x,(1-t)u)$ for each pair $(x,u)\in S^1\times S^{m-1}$ and each $0<t<1$. The bundle projections $E_k\to N_k, k=1,2$ induce an $S^1$-bundle projection $E_1\#_{S^1}E_2\to N_1\# N_2$ (c.f.~\cite[Section 3]{HS13}).

Let $m\geq 3$ and let $\alpha:S^1\to SO(m)$ represent the generator of $\pi_1SO(m)=\Z_2$. Let $\tau:S^1\times D^m\to S^1\times D^m$ be the homeomorphism given by $\tau(t,x)=(t,\alpha(t)x).$ Take an embedding $f:D^m\to N$ for an $m$-manifold $N$. Let $\Sigma_0N$ and $\Sigma_1N$ be the result manifolds obtained by surgery along the embeddings
$$id_{S^1}\times f: S^1\times D^m\to S^1\times N,$$
$$(id_{S^1}\times f)\circ\tau: S^1\times D^m\to S^1\times N,$$
respectively. 

\begin{lemma}(\cite[Theorem B and Proposition 3.2]{D22}) \label{lm:DuanThB}
  Let $E\to B$ be an $S^1$-bundle over an $m-$manifold with $E$ simply-connected and $m\geq 4$. Then for any simply-connected $m$-manifold $N$, the $S^1$-connected sum $E\#_{S^1}(S^1\times N)$ is diffeomorphic to $E\#\Sigma_0N$ if $B$ is nonspin and to $E\#\Sigma_1N$ if $B$ is spin. 

There are diffeomorphisms 
$$\Sigma_0(S^p\times S^q) \cong \Sigma_1(S^p\times S^q) \cong (S^p\times S^{q+1})\# (S^{p+1}\times S^q)$$
for any $p\leq q$ with $q\geq 3$
\end{lemma}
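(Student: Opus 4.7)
The plan is to prove the two assertions of the lemma separately. For the first, my strategy is to identify both $E\#_{S^1}(S^1\times N)$ and $E\#\Sigma_\epsilon N$ as manifolds obtained by gluing $E_0:=\pi^{-1}(B\setminus\mathring{D}^m)$ (the complement in $E$ of a fiber tubular neighborhood) to $S^1\times N_0$ (where $N_0:=N\setminus\mathring{D}^m$) along the common boundary $S^1\times S^{m-1}$, and to compare the two resulting gluing diffeomorphisms. From the definition of the $S^1$-connected sum, $E\#_{S^1}(S^1\times N)=E_0\cup_{\mathrm{id}}(S^1\times N_0)$, where the identification is induced by the embedding $j_1$ of the fiber tube. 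For $E\#\Sigma_\epsilon N$, I would use the decomposition $\Sigma_\epsilon N=(D^2\times S^{m-1})\cup_{\phi_\epsilon}(S^1\times N_0)$ with $\phi_0=\mathrm{id}$ and $\phi_1$ involving $\tau$, isotope the ball removed from $E$ to lie inside the fiber tube $S^1\times D^m$, and isotope the ball removed from $\Sigma_\epsilon N$ to lie inside $D^2\times S^{m-1}$. The resulting boundary-connected sum of $S^1\times D^m$ and $D^2\times S^{m-1}$ inside the connected-sum neck simplifies back to a fresh gluing $E_0\cup_{\psi_\epsilon}(S^1\times N_0)$, converting the problem into a framing comparison.

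The key computation is then the difference between $\psi_\epsilon$ and the $S^1$-connected-sum gluing. The embedding $j_1$ of the fiber tube is determined only up to a $\pi_1 SO(m)=\Z/2$ ambiguity coming from the framing of the normal bundle of the fiber circle; this ambiguity is realized exactly by the twist $\tau$. I would then argue that the obstruction to matching the two framings is detected by the second Stiefel--Whitney class of $B$, namely it is trivial precisely when $B$ is spin and nontrivial when $B$ is nonspin. Combining this with $\phi_\epsilon$ yields the claimed dichotomy: the gluing matches $\Sigma_0 N$ when $B$ is nonspin and $\Sigma_1 N$ when $B$ is spin.

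For the second assertion, I would first prove $\Sigma_0(S^p\times S^q)\cong\Sigma_1(S^p\times S^q)$ by extending the twist $\tau$ to an ambient self-diffeomorphism of $S^1\times(S^p\times S^q)$. Since $q\geq 3$, the generator of $\pi_1 SO(q+1)=\Z/2$ gives a loop of rotations of $S^q$; extended by the identity on $S^p$, this yields a loop in $\mathrm{Diff}(S^p\times S^q)$ whose derivative at a basepoint represents the generator of $\pi_1 SO(p+q)$, and hence absorbs $\tau$. For the identification with $(S^p\times S^{q+1})\#(S^{p+1}\times S^q)$, I would compute the surgery directly via handle decompositions: $S^1\times S^p\times S^q$ inherits a handle structure from the product factors, and surgering $S^1\times\mathrm{pt}\times\mathrm{pt}$ replaces the $1$-handle of the $S^1$ factor with a $2$-handle. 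Reading off the resulting handlebody and verifying the intersection pattern (or appealing to the Smale--Barden classification of highly connected manifolds in this dimension) identifies the result with $(S^p\times S^{q+1})\#(S^{p+1}\times S^q)$.

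The main obstacle is the framing bookkeeping in the first stage: one must pin down precisely how the $\pi_1 SO(m)=\Z/2$ ambiguity of the fiber tube framing corresponds to the spin type of $B$, and ensure the correct match with $\epsilon\in\{0,1\}$. The ambient-diffeomorphism argument in the second stage is cleaner but relies essentially on the hypothesis $q\geq 3$.
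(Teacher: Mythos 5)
First, a point of order: the paper does not prove this lemma at all --- it is imported verbatim from Duan \cite[Theorem B and Proposition 3.2]{D22} --- so there is no in-paper argument to compare yours against, and your proposal must stand on its own. On its own terms, the outline of the first part follows the route one would expect, but it has a genuine gap at its central step. After reducing both $E\#_{S^1}(S^1\times N)$ and $E\#\Sigma_\epsilon N$ to gluings $E_0\cup_{\psi}(S^1\times N_0)$ along $S^1\times S^{m-1}$ (which already requires the nontrivial identification $(S^1\times D^m)\#(D^2\times S^{m-1})\cong (S^1\times S^{m-1})\times[0,1]$ to absorb the connected-sum neck --- you should justify this), everything hinges on the claim that the $\pi_1 SO(m)=\Z/2$ discrepancy between the bundle framing of the fiber circle and the surgery framings defining $\Sigma_0N,\Sigma_1N$ is detected by $w_2(B)$, with the stated assignment of $\epsilon$ to the spin type. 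You state this as the outcome of a ``framing comparison'' and then name it as the main obstacle, but no mechanism is supplied: why $w_2(B)$ rather than $w_2(E)$ or the mod $2$ Euler class, and which framing goes with which $\epsilon$? This is exactly the content of Theorem B of \cite{D22}; a proof must, for instance, cap off the fiber circle by a $2$-chain in $E$ (using $\pi_1(E)=0$) and compute the obstruction to extending the bundle framing over it, which is where $w_2$ enters. As written, the dichotomy is asserted, not proved.

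The second part is in better shape. Realizing $\tau$ as the derivative along $S^1\times\{\ast\}$ of an ambient loop of rotations of the $S^q$ factor is correct, and uses $q\geq 3$ exactly where needed (so that $\pi_1 SO(q)\to\pi_1 SO(p+q)$ is an isomorphism); this gives $\Sigma_0(S^p\times S^q)\cong\Sigma_1(S^p\times S^q)$. However, the fallback appeal to a ``Smale--Barden classification of highly connected manifolds'' is misplaced: for general $p<q$ the manifold $(S^p\times S^{q+1})\#(S^{p+1}\times S^q)$ has homology in the four degrees $p,q,p+1,q+1$ and is not highly connected in the range where such classification theorems apply. The handle computation you gesture at does work and is the right tool; the cleanest form is the observation that $\Sigma_0N\cong\partial(N_0\times D^2)$ with $N_0=N\setminus\mathring{D}^m$, and for $N=S^p\times S^q$ the manifold $N_0\times D^2$ is $D^{p+q+2}$ with a trivially attached $p$-handle and $q$-handle, whose boundary is visibly $(S^p\times S^{q+1})\#(S^{p+1}\times S^q)$. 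I recommend replacing the classification appeal with this argument and, above all, supplying the $w_2$ computation in the first part.
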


\begin{proposition}\label{Lm:connected sum admits circle action}
Let $E$ be a simply-connected $(2n+1)$-manifold and $n\geq 3$. If $E$ admits a free circle action, then $\#_2 (S^n\times S^{n+1})\# E$ admits a free circle action.
\end{proposition}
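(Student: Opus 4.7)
The plan is to apply Lemma \ref{lm:DuanThB} directly with the auxiliary manifold $N=S^n\times S^n$. The first step is to record that since $E$ is simply-connected and admits a free circle action, it is the total space of a principal $S^1$-bundle $E\to B$ over a closed oriented $2n$-manifold $B$, and the long exact homotopy sequence of this bundle forces $B$ to be simply-connected (hence in particular fits the hypotheses of Lemma \ref{lm:DuanThB}).

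Next, consider the trivial $S^1$-bundle $S^1\times N\to N$ with $N=S^n\times S^n$; this $N$ is a simply-connected $2n$-manifold, and $n\geq 3$ guarantees $m=2n\geq 4$. The $S^1$-connected sum $E\#_{S^1}(S^1\times N)$ is then, by construction, the total space of an $S^1$-bundle over $B\# N$ inheriting a free $S^1$-action from the bundle structure, and hence admits a free circle action.

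It remains to identify this total space with the manifold of interest. Lemma \ref{lm:DuanThB} gives
\[
E\#_{S^1}(S^1\times N)\;\cong\;E\#\Sigma_\epsilon N,
\]
where $\epsilon\in\{0,1\}$ depending on whether $B$ is nonspin or spin. Taking $p=q=n\geq 3$ in the second part of the lemma yields
\[
\Sigma_0(S^n\times S^n)\;\cong\;\Sigma_1(S^n\times S^n)\;\cong\;(S^n\times S^{n+1})\#(S^{n+1}\times S^n)\;=\;\#_2(S^n\times S^{n+1}),
\]
so independent of the spin type of $B$ one obtains $E\#_{S^1}(S^1\times S^n\times S^n)\cong E\#\#_2(S^n\times S^{n+1})$, and the free circle action on the left-hand side transports to the required action on the right.

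There is essentially no obstacle once one has Lemma \ref{lm:DuanThB}: the only nontrivial choice is picking $N=S^n\times S^n$ so that the ambiguity between $\Sigma_0$ and $\Sigma_1$ becomes irrelevant and the result of the surgery is exactly $\#_2(S^n\times S^{n+1})$. The dimension hypotheses ($m\geq 4$ and $q\geq 3$) are automatically satisfied from $n\geq 3$, and simple-connectedness of the base $B$ is automatic from simple-connectedness of $E$, so no further verification is needed.
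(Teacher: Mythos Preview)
Your proof is correct and follows essentially the same route as the paper: form the $S^1$-connected sum $E\#_{S^1}(S^1\times S^n\times S^n)$ over $B\#(S^n\times S^n)$, apply Lemma~\ref{lm:DuanThB} to identify it with $E\#\Sigma_\epsilon(S^n\times S^n)$, and then use the second part of that lemma to see $\Sigma_\epsilon(S^n\times S^n)\cong\#_2(S^n\times S^{n+1})$ regardless of $\epsilon$. The only difference is cosmetic---you spell out the simple-connectedness of $B$ and the dimension checks, which the paper leaves implicit.
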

\begin{proof}Let $B$ be the orbit space of a free circle action on $E$. Then $E$ is the total space of a circle bundle over $B$, and $E\#_{S^1}(S^1\times S^n\times S^n)$ is the total space of a circle bundle over $B\# (S^n\times S^n)$. By Lemma \ref{lm:DuanThB}, $E\#_{S^1}(S^1\times S^n\times S^n)$ is diffeomorphic to either $E\#\Sigma_0(S^n\times S^n)$ or $E\#\Sigma_1(S^n\times S^n)$, and hence is diffeomorphic to $\#_2 (S^n\times S^{n+1}) \# E$ by the second paragraph of Lemma \ref{lm:DuanThB}. This proves the Proposition.
\end{proof}

\begin{corollary}\label{Co:circle action on exotic sum}
  Let $r,n$ be positive integers and $n\geq 3$. If $\Sigma$ is a homotopy $(2n+1)$-sphere admitting a free circle action, then $\#_{2r}(S^n\times S^{n+1})\#\Sigma$ admits a free circle action.
\end{corollary}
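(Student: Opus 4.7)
The plan is a straightforward induction on $r$, using Proposition \ref{Lm:connected sum admits circle action} as the inductive engine. The hypothesis that $\Sigma$ is a homotopy $(2n+1)$-sphere with $n\geq 3$ guarantees that $\Sigma$ is a simply-connected $(2n+1)$-manifold, so Proposition \ref{Lm:connected sum admits circle action} applies to $E=\Sigma$: this gives the base case $r=1$, namely that $\#_{2}(S^n\times S^{n+1})\#\Sigma$ admits a free circle action.

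For the inductive step, assume the statement holds for $r-1$, so that $E_{r-1}:=\#_{2(r-1)}(S^n\times S^{n+1})\#\Sigma$ admits a free circle action. I would check that $E_{r-1}$ is still a simply-connected $(2n+1)$-manifold: since $n\geq 3$, each factor $S^n\times S^{n+1}$ is simply-connected, $\Sigma$ is simply-connected, and the connected sum of simply-connected manifolds of dimension $\geq 3$ is simply-connected. Thus Proposition \ref{Lm:connected sum admits circle action} applies with $E=E_{r-1}$ and yields a free circle action on
\[
\#_{2}(S^n\times S^{n+1})\,\#\,E_{r-1}\;\cong\;\#_{2r}(S^n\times S^{n+1})\,\#\,\Sigma,
\]
completing the induction.

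There is no real obstacle here: the only point to verify is that the hypotheses of Proposition \ref{Lm:connected sum admits circle action} (simply-connectedness of the total space, dimension $2n+1$ with $n\geq 3$) propagate through the induction, which they do because connected sum preserves simple-connectedness in this dimension range.
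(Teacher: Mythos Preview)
Your proposal is correct and is exactly the argument the paper intends: the paper states this result as an immediate corollary of Proposition~\ref{Lm:connected sum admits circle action} without writing out a proof, and your induction on $r$ is precisely the obvious way to extract it.
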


Note that there are many exotic $(2n+1)$-spheres admitting free circle actions. In fact, the set of homotopy $(2n+1)$-spheres admitting free circle actions has been determined for $n=3,4,5,6$ by \cite{MY68} and \cite[p.402, Theorem I.10(i)]{B71}.

\begin{proof}[Proof of Theorem \ref{mainthn=5mod8}]By Theorem \ref{classification} there is a homotopy sphere $\Sigma$ such that $M\#\Sigma$ is diffeomorphic to $\#_r(S^n\times S^{n+1})$ where $r$ is the rank of $H_n(M)$. Hence it suffices to show that $\#_r(S^n\times S^{n+1})$ admits a free circle action. This is a direct consequence of Corollary \ref{Co:circle action on exotic sum}
 and the fact that $S^{2n+1}$ and $S^n\times S^{n+1}$ admit free circle actions with orbit spaces $\C P^n$ and $\C P^{\frac{n-1}{2}}\times S^{n+1}$ respectively.
\end{proof}

\section{Cohomology of the orbit space}\label{se:coh of orbit}
If a $(2n+1)$-manifold $M$ admits a free circle action, then it is the total space of the corresponding circle bundle over the orbit space $N$ which is a $2n$-manifold. Conversely, the total space $M$ of a circle bundle over a $2n$-manifold $N$ is a $(2n+1)$-manifold which admits a free circle action. In this section we analyze the cohomology ring of $N$ when $M$ is $(n-1)$-connected with torsion free homology. In the next section we study the Pontrjagin classes of $N$.

\blm\label{Lm:cohomology of orbit}
Let $n$ be an odd integer greater than $1$. Let $S^1\times M\to M$ be a free circle action on an $(n-1)$-connected $(2n+1)$-manifold $M$ with torsion free homology. Then the orbit space $N$ is a simply-connected $2n$-manifold whose cohomology ring $H^*(N)$ is isomorphic to 
  $H^*(\#_r(S^n\times S^n)\# \C P^n)$ or $H^*(\#_r(S^n\times S^n)\# (\C P^{\frac{n-1}{2}}\times S^{n+1}))$.
  
Conversely, every simply-connected $2n$-manifold $N$ with
  $$H^*(N)\cong H^*(\#_r(S^n\times S^n)\# \C P^n)\mbox{ or } H^*(\#_r(S^n\times S^n)\# (\C P^{\frac{n-1}{2}}\times S^{n+1}))$$
  can be realized as the orbit space of some free circle action on an $(n-1)$-connected $(2n+1)$-manifold with torsion free homology.
\elm

\begin{proof}
To prove the lemma, it suffices to verify that for a circle bundle $S^1 \to M \to N$, the manifolds $M$ and $N$ satisfy the conditions stated in the lemma. The fundamental groups of $M$ and $N$ are related in the homotopy exact sequence of the circle bundle
\begin{equation}\label{fundamental}
\pi_1(S^1) \to \pi_1(M) \to \pi_1(N) \to 0.
\end{equation}
The cohomology groups of $M$ and $N$ are related in the Gysin sequence 
\begin{equation}\label{longGysin}
  \xymatrix{H^{j-2}(N)\ar[r]^-{- \cup \, t}&H^j(N)\ar[r]&H^j(M)\ar[r]&H^{j-1}(N)\ar[r]^-{- \cup \, t}&H^{j+1}(N)},
\end{equation}
 where $t \in H^2(N)$ is the Euler class of the circle bundle.
 
 Now assume that $M$ is $(n-1)$-connected $(2n+1)$-manifold with torsion free homology. The exact sequence (\ref{fundamental}) implies that $N$ is simply connected. The cohomology ring $H^*(N)$ is computed from the Gysin sequence 
\begin{equation}\label{Gysin}
  \xymatrix{H^{i-1}(M)\ar[r]&H^{i-2}(N)\ar[r]^-{- \cup \,t}&H^i(N)\ar[r]&H^i(M)}.
\end{equation}
Since $M$ is $(n-1)$-connected, the exact sequence (\ref{Gysin}) implies that for $0\leq i\leq n-1$,
$$H^i(N)=\left\{
           \begin{array}{ll}
             \Z t^{\frac{i}{2}}, & \hbox{if $i$ is even;} \\
             0, & \hbox{otherwise,}
           \end{array}
         \right.
$$
and for $n+3\leq i\leq 2n$, the homomorphism $- \cup t \colon H^{i-2}(N) \to H^i(N)$ is an isomorphism.
Moreover, the Gysin sequence implies that there is a monomorphism $H^n(N)\to H^n(M)$. Since $M$ is $(n-1)$-connected, this implies $H^n(M)$ is free and hence $H^n(N)$ free. Note that the rank of $H^n(N)$ must be even, because the intersection form of $N$ is nondegenerate and skew-symmetric since $n$ is odd. Now since $H^{n-1}(N)=\Z t^{\frac{n-1}{2}}$ and $H^{n+1}(N)\cong H_{n-1}(N)\cong\Z$, it remains to show the homomorphism $- \cup t \colon H^{n-1}(N)\to H^{n+1}(N)$ must be either an isomorphism or a trivial map. This is easily obtained by the Gysin sequence 
$$\xymatrix{H^{n-1}(N)\ar[r]^-{- \cup \,t}&H^{n+1}(N)\ar[r]&H^{n+1}(M)}$$
and the fact that $H^{n+1}(M)\cong H_n(M)$ is free.
 
Conversely, assume $N$ is simply-connected,  the cohomology ring $H^*(N)$ is isomorphic to either $H^*(\#_r(S^n\times S^n)\#\C P^n)$ or $H^*(\#_r(S^n\times S^n)\#(\C P^{\frac{n-1}{2}}\times S^{n+1}))$ for some $r\geq 0$, and the Euler class $t$ is a generator  of $H^2(N)\cong \Z$. The exact sequence (\ref{fundamental}) implies that $\pi_1M$ is abelian and hence  $\pi_1M\cong H_1(M)$. To prove that $M$ is $(n-1)$-connected with torsion free homology, it suffices to show that $H_i(M)$ is trivial for $1\leq i\leq n-1$ and $H_n(M)$ is torsion free. This follows from the exact sequence (\ref{longGysin}).
When $j=1$, the sequence (\ref{longGysin}) implies that $H^1(M)=0$ since $H^1(N)=0$ and $- \cup t\colon H^0(N) \to H^2(N)$ is an isomorphism. When $2\leq j\leq n-1$, the sequence (\ref{longGysin}) implies that $H^j(M)=0$ because $H^{n-2}(N)=0$ and $- \cup t \colon H^{i-2}(N) \to H^i(N)$ is an isomorphism for $2\leq i\leq n-1$. When $j=n,n+1$, the sequence (\ref{longGysin}) implies that $H^j(M)$ is free because the cohomology groups of $N$ are free, the homomorphism $- \cup t \colon H^i(N)\to H^{i+2}(N)$ is trivial for $i=n-2,n$ and $- \cup t \colon H^{n-1}(N)\to H^{n+1}(N)$ is either an isomorphism or a trivial map. This shows that $H_i(M)$ is trivial for $1\leq i\leq n-1$ and $H_n(M)\cong H^{n+1}(M)$ is torsion free.
\end{proof}

\section{Pontrjagin classes of the orbit spaces}\label{se:P-class of orbit}

We have seen in the last section that the cohomology ring of the orbit space $N$ of a free circle action on $M$ is isomorphic to $H^*(\#_r(S^n \times S^n)\# \C P^n)$ or $H^*(\#_r(S^n \times S^n) \#  (\C P^{\frac{n-1}{2}}\times S^{n+1}))$. It will be shown in the proof of Theorem \ref{mainthn=7mod8} that in the first case, the Pontrjagin class $p_{\frac{n+1}{4}}(M)$ vanishes. In order to determine the Pontrjagin class $p_{\frac{n+1}{4}}(M)$ in the second case,  we analyze the Pontrjagin class of $N$ in this section. Since we are in the case $n \equiv 7 \pmod 8$, we assume $n=4k-1$. Though under our assumption $k$ is even, Lemma 4.1 and Lemma 4.2 hold also for $k$ odd.

\blm\label{Lm: Restriction of Pontrjagin class}
Let $N$ be a simply connected $(8k-2)$-manifold whose cohomology ring is  isomorphic to $H^*(\#_r(S^{4k-1}\times S^{4k-1})\# (\C P^{2k-1}\times S^{4k}))$ with $r\geq0$ and $k\geq 1$. If the Pontrjagin class $p_{k}(N)$ is $d(N)$ multiple of a primitive element, then $d(N)$ is divisible by $(2k-1)! \cdot \den(\frac{B_k}{4k})$. 
\elm
\begin{proof}
Let $\{\hat{A}_k(p_1,\cdots,p_k)\}$ be the multiplicative sequence of polynomials with
$$\frac{\sqrt{t}/2}{\sinh(\sqrt{t}/2)}=1+\sum_{n=1}^{\infty}(-1)^n\frac{(2^{2n}-2)B_n}{2^{2n}((2n)!)}t^{n}$$
as characteristic power series (cf.\cite[\S1]{H95}). Let $\alpha_k$ be the coefficient of $p_k$ in $\hat{A}_k(p_1,\cdots,p_k)$.  We first show the following two facts:
\begin{enumerate}
  \item [(1)] $\alpha_k \cdot d(N)$ is an integer.
  \item [(2)] The integer $d(N)$ is divisible by $a_k \cdot (2k-1)!$, where $a_k=1$ if $k$ is even,  $a_k=2$ if $k$ is odd. 
\end{enumerate}

The proof of (1) is based on the integrality of twisted $\hat A$-genera. Let $ch(\eta) \in H^*(N;\mathbb Q)$ be the Chern character of a virtual complex vector bundle $\eta$ over $N$, $d\in H^2(N)$ be a cohomology class whose mod $2$ reduction is the second Stiefel-Whitney class of $N$, $\hat{A}(N)= \sum_{i=1}^{\infty}\hat{A}_i(p_1(N),\cdots,p_i(N))$ be the $\hat A$-class of $N$, and $[N] \in H_{2n}(N)$ be the fundamental class of $N$, then  by  \cite[Theorem 26.1.1]{H95}, $\langle ch(\eta) \cdot e^{d/2}\cdot\hat{A}(N),[N]\rangle$ is an integer. Now let $L$ be the complex line bundle over $N$ with first Chern class $c_1(L)=t$, a generator of $H^2(N)$. Take $\eta = (L-1)^{2k-1}$. By the assumption of $H^*(N)$, 
\begin{eqnarray*}
\langle ch(\eta) \cdot e^{d/2}\cdot\hat{A}(N),[N]\rangle & = & \langle(e^t-1)^{2k-1}e^{d/2}\hat{A}(N),[N]\rangle \\
  &=& \langle t^{2k-1}e^{d/2}\hat{A}(N),[N]\rangle \\
  &=& \langle t^{2k-1}\hat{A}(N),[N]\rangle \\
  & = & \langle t^{2k-1} (\alpha_k p_k(N) +\cdots), [N]\rangle \\
  & = & \pm \alpha_k d(N).
\end{eqnarray*}
This proves (1).

To prove (2), let $\pi:N_t\to N$ be the $S^1$-bundle over $N$ with Euler class $t$, a generator of $H^2(N)$. In the segment of the Gysin sequence
$$\xymatrix{H^{4k-2}(N)\ar[r]^-{- \cup \, t}&H^{4k}(N)\ar[r]^-{\pi^*}&H^{4k}(N_t)\ar[r]&H^{4k-1}(N)\ar[r]^-{- \cup \, t}&H^{4k+1}(N)},$$
the two homomorphisms $- \cup \, t$ are trivial, 
therefore we have a split short exact sequence  
\begin{equation}\label{shortGysin}
  0 \to \xymatrix{H^{4k}(N)\ar[r]^-{\pi^*}&H^{4k}(N_t)\ar[r]&H^{4k-1}(N)} \to 0.
\end{equation}
Furthermore,  the tangent bundle $TN_t$ of $N_t$ is isomorphic to the Whitney sum $\pi^*TN \oplus V$, where $V = \ker d\pi$ is the vertical bundle of the circle bundle. The orientable line bundle $V$ is trivial, therefore $p_{k}(N_t)=\pi^*p_{k}(N)$ and hence $l(N_t)=d(N)$ by the sequence (\ref{shortGysin}). By Lemma \ref{Lm:cohomology of orbit} $N_t$ is a $(4k-1)$-connected $(8k-1)$-manifold. Since $\pi_{4k-2}SO=0$, it is shown in \cite[Lemma 1.1]{K59} that $l(N_t)$ is divisible by $a_k(2k-1)!$. This proves (2).

Now by (2), there is an integer $\beta$ such that
$d(N)=a_k(2k-1)!\beta$. Since $\alpha_k=-\frac{B_k}{2 \cdot (2k)!}$(cf.\cite[3.4]{BH60}), we have
$$\alpha_k d(N)=-\frac{B_k}{2 \cdot (2k)! }a_k(2k-1)!\beta=-\frac{B_ka_k\beta}{4k}$$
is an integer. It follows that $\beta$ must be divisible by $\den(\frac{B_ka_k}{4k})$ and hence $d(N)=a_k(2k-1)!\beta$ must be a multiple of $a_k(2k-1)!\den(\frac{B_ka_k}{4k})$.
Now it suffices to show that $\den(\frac{a_kB_k}{4k})=\den(\frac{B_k}{4k})/a_k$. If $k$ is even,  then $a_k=1$ and this is clearly true. When $k$ is odd, $a_k=2$. It is known that $\den(B_k)$ is even (cf.\cite[Appendix B, Theorem B.3]{MS74}), hence the numerator of $B_k$ is odd. From this it is easy to see $\den(\frac{2B_k}{4k})= \den(\frac{B_k}{4k})/2$.
\end{proof}

\blm\label{Lm:realization}Let $a_k=1$ when $k$ is even, and $a_k=2$ when $k$ is odd.
For any integer $d$ divisible by $a_k \cdot (2k-1)! \cdot \den(\frac{B_k}{4k})$, there exists a $(8k-2)$-manifold $N$ homotopy equivalent to $\#_r(S^{4k-1}\times S^{4k-1})\# (\C P^{2k-1}\times S^{4k})$, such that the Pontrjagin class $p_k(N)$ is $d$ multiple of a primitive element.
\elm
The proof of this lemma uses surgery theory. We recall some elementary notions here (cf.~\cite[p.109]{W99}, \cite[pp.45-46]{B72}). Let $BO_k$ be the classifying space of the orthogonal group $O_k$. Let $G_k$ be the topological monoid of self-homotopy equivalences of $S^{k-1}$ and let $BG_k$ be its classifying space. The natural maps $G_k\to G_{k+1}$ and $O_k\to O_{k+1}$ induce maps $BG_k\to BG_{k+1}$ and $BO_k\to BG_{k+1}$, respectively. Then $BO = \lim_{k \to \infty} BO_k$ is the classifying space of stable vector bundles and $BG= \lim_{k \to \infty}BG_k$ is the classifying space of stable spherical fibrations. By taking the sphere bundle associated to a real vector bundle one has a forgetful map $BO\to BG$. The induced homomorphism $J \colon \pi_i(BO) \to \pi_i(BG)$ is the $J$-homomorphism. 
\begin{proof}
Let $\xi$ be a stable vector bundle over $S^{4k}$, whose sphere bundle is trivial as a stable spherical fibration. This means that $\xi$ is in the kernel of the $J$-homomorphism $J \colon \pi_{4k}(BO) \to \pi_{4k}(BG)$. The group $\pi_{4k}(BO)$ is isomorphic to $\mathbb Z$ and $\ker J$ is a subgroup of index $\den(\frac{B_k}{4k})$ (first proved by Adams \cite[Theorem 3.7]{Ada65} assuming the Adams conjecture \cite[Conjecture 1.2]{Ada63}, which was later proved by Quillen \cite{Q71}). It is shown by Kervaire \cite[Lemma 1.1]{K59} that the image of the homomorphism $p_k \colon \pi_{4k}(BO) \to \mathbb Z$, $\xi \mapsto \langle p_k(\xi), [S^{4k}] \rangle $ is a subgroup of index $a_k \cdot (2k-1)!$.

For an integer $d$  divisible by $a_k \cdot (2k-1)! \cdot \den(\frac{B_k}{4k})$, we may choose a vector bundle $\xi$ such that $\langle p_k(\xi), [S^{4k}] \rangle =d$. Since $\xi$ is a vector bundle reduction of the Spivak normal fibration of $S^{4k}$, we may consider the surgery problem 
$$
\xymatrix{
 \nu X \ar[r]^{\bar f} \ar[d] &  \xi \ar[d] \\
 X \ar[r]^f & S^{4k}}
 $$
where $X$ is a closed $4k$-manifold,  $f$ is map of degree $1$, covered by a bundle map $\bar f$ from the stable normal bundle $\nu X$ to $\xi$.  The surgery obstruction, i.~e., the obstruction to doing surgery on $(X,f)$ to get a homotopy equivalence $f' \colon X' \to S^{4k}$, is $\theta(\bar f,f) \in L_{4k}(\mathbb Z) \cong \mathbb Z$, where $L_{4k}(\mathbb Z)$ is the Wall surgery obstruction group. 

Taking the product with $\C P^{2k-1}$ we have a surgery problem 
$$
\xymatrix{
 \nu \C P^{2k-1} \times \nu X \ar[r]^{\mathrm{id} \times \bar f} \ar[d] &  \nu \C P^{2k-1} \times \xi \ar[d] \\
\C P^{2k-1} \times X \ar[r]^{\mathrm{id} \times f} & \C P^{2k-1} \times S^{4k}}
 $$
By the product formula of surgery obstructions (cf.~\cite[p.33]{B72}, \cite[Lemma 13B.4]{W99}), the surgery obstruction of this surgery problem is 
$$\theta(\mathrm{id} \times \bar f, \mathrm{id} \times f) = \chi(\C P^{2k-1}) \cdot \theta(\bar f,f) \in L_{8k-2}(\mathbb Z),$$
where $\chi(\C P^{2k-1})$ is the Euler characteristic. Since $\chi(\C P^{2k-1})=2k$ and $L_{8k-2}(\mathbb Z) \cong \mathbb Z/2$, the surgery obstruction $\theta(\mathrm{id} \times \bar f, \mathrm{id} \times f) $ vanishes. Therefore by surgery we get a homotopy equivalence $g \colon Y \to \C P^{2k-1} \times S^{4k}$, which is covered by a bundle map $\bar g \colon \nu Y \to \nu \C P^{2k-1} \times \xi$. The Pontrjagin class 
$$p_k(Y) = p_k(TY) = g^*p_k(T\C P^{2k-1} \times \xi^{-1}),$$ 
where $T\C P^{2k-1}$ is the stable tangent bundle of $\C P^{2k-1}$ , and $\xi^{-1}$ is the stable inverse of $\xi$. Since $p_k(T\C P^{2k-1} \times \xi^{-1}) = - \pi_2^* (p_k(\xi))$, where $\pi_2 \colon \C P^{2k-1} \times S^{4k} \to S^{4k}$ is the projection, the divisibility of $p_k(Y)$ equals the divisibility of $p_k(\xi)$. Taking connected sum with $r$ copies of $S^{4k-1} \times S^{4k-1}$ we get $N$.
\end{proof}

\section{Proof of Theorem \ref{mainthn=7mod8}}\label{se:case n=7mod8}
\begin{proof}[Proof of Theorem \ref{mainthn=7mod8}]
Let $N$ be the orbit space of a free circle action on $M$. Then $M$ is the total space of a circle bundle $\eta$ over $N$, $\pi \colon M \to N$ with Euler class $t$,  a generator of  $H^2(N)$.  The tangent bundle $TM$  is isomorphic to the Whitney sum $\pi^*TN \oplus V$, where $V = \ker d\pi$ is the vertical bundle of  $\eta$ and is trivial as an orientable line bundle. Therefore
$$p_{\frac{n+1}{4}}(M)=\pi^*p_{\frac{n+1}{4}}(N).$$
By Lemma \ref{Lm:cohomology of orbit},  the cohomology ring $H^*(N)$ is isomorphic to $H^*(\#_r(S^n\times S^n)\#\C P^n)$ or $H^*(\#_r(S^n\times S^n)\#(\C P^{\frac{n-1}{2}}\times S^{n+1}))$.
If $H^*(N)$ is isomorphic to $H^*(\#_r(S^n\times S^n)\#\C P^n)$, then in the Gysin sequence
 $$\xymatrix{H^{n-1}(N)\ar[r]^-{-\cup \,t}&H^{n+1}(N)\ar[r]^-{\pi^*}&H^{n+1}(M)\ar[r]&H^n(N)\ar[r]&0}$$
the homomorphism $-\cup \,t$  is an isomorphism, therefore the homomorphism $\pi^*$ is trivial and hence $$H_n(M)\cong H^{n+1}(M)\cong H^n(N)\cong \Z^{2r}, \ \ p_{\frac{n+1}{4}}(M)=\pi^*p_{\frac{n+1}{4}}(N)=0.$$ 
      If  $H^*(N)$ is isomorphic to $H^*(\#_r(S^n\times S^n)\#(\C P^{\frac{n-1}{2}}\times S^{n+1}))$, then in the Gysin sequence $$\xymatrix{H^{n-1}(N)\ar[r]^-{- \cup \,t}&H^{n+1}(N)\ar[r]^-{\pi^*}&H^{n+1}(M)\ar[r]&H^n(N)\ar[r]&0}$$
the homomorphism $-\cup \,t$ is trivial. Therefore we have a split short exact sequence  
$$0 \to \xymatrix{H^{n+1}(N)\ar[r]^-{\pi^*}&H^{n+1}(M)\ar[r]&H^n(N)} \to 0.$$  
This implies that  $H^{n+1}(M)\cong \Z^{2r+1}$ and the divisibility $l(M)$ of $p_{\frac{n+1}{4}}(M)$ equals to the divisibility $d(N)$ of $p_{\frac{n+1}{4}}(N)$. By Lemma \ref{Lm: Restriction of Pontrjagin class}, the integer $l(M)$ must satisfy the condition in Theorem \ref{mainthn=7mod8}. 

Conversely, the total space of the circle bundle over $\#_r(S^n\times S^n)\#\C P^n$ with Euler class a generator of $H^2(\#_r(S^n\times S^n)\#\C P^n)$ is diffeomorphic to $\#_{2r}(S^n \times S^{n+1})$, as we have seen in the proof of Proposition \ref{Lm:connected sum admits circle action}. On the other hand, for an integer $l$ satisfying the condition in Theorem \ref{mainthn=7mod8}, let $N$ be a manifold given in Lemma 
\ref{Lm:realization} with $d(N)=l$, and take the total space $M$ of the circle bundle  over $N$ with Euler class a generator of $H^2(N)$. Then by the above discussion $H_n(M)$ is isomorphic to $\mathbb Z^{2r+1}$ and $l(M)$ equals $d(N)$. By Theorem \ref{classification} $M$ is diffeomorphic to $\#_{2r}(S^n \times S^{n+1}) \# X_l$.
\end{proof}

\section{Appendix}
In this section we describe the structure of $(n-1)$-connected $(2n+1)$-manifolds with torsion free homology and $n \equiv 3 \pmod 4$ as a twisted double, and give a proof of the divisibility of $p_{\frac{n+1}{4}}(M)$. 

Let $M$ be an $(n-1)$-connected $(2n+1)$-manifold with torsion free homology, let $\{x_1, \cdots, x_r\}$ be a basis of $H_n(M)$, which are represented by embedded spheres $S^n_1, \cdots , S^n_r \subset M$. Assume that $n \equiv 3 \pmod 4$, then since in this dimension $\pi_{n-1}SO(n+1)=0$, the normal bundles of these embedded spheres are trivial. Choose a framing of the normal bundle, we have submanifolds $S^n_i \times D^{n+1} \subset M$ for $i=1, \cdots r$. Connect these submanifolds by tubes we have a submanifold $V_1 = \natural_r (S^n \times D^{n+1}) \subset M$, where $\natural$ stands for the boundary connected sum operation. The homomorphism $H_n(M - \mathring V_1) \to H_n(M)$ induced by the inclusion map $M - \mathring V_1 \to M$ is an isomorphism, therefore the preimages $ \bar x_1, \cdots , \bar x_r \in H_n(M - \mathring V_1)$ of $x_1, \cdots , x_r$ form a basis of $H_n(M - \mathring V_1)$. There are embedded spheres $S^n_1, \cdots, S^n_r \subset M$ representing this basis, and we obtain a submanifold $V_2 = \natural_r(S^n \times D^{n+1}) \subset M-\mathring V_1$. From the construction of $V_1$ and $V_2$, it is easy to see that the homomorphisms $H_*(\partial V_i) \to H_*(M-(\mathring V_1 \cup \mathring V_2))$ induced by the inclusion maps are isomorphisms. Therefore $M-(\mathring V_1 \cup \mathring V_2)$ is an $h$-cobordism and we have a decomposition 
$$M = V_1 \cup_f V_2,$$
where $f \colon \partial V_1 =\#_r(S^n \times S^n) \to \partial V_2 = \#_r(S^n \times S^n)$ is a diffeomorphism. 
  
Now we look at the isomorphism $f_* \colon H_n(\#_r(S^n \times S^n)) \to H_n(\#_r(S^n \times S^n))$ induced by $f$. Let $\{e_1, \cdots , e_r, f_1, \cdots, f_r\}$ be the standard symplectic basis of $H_n(\#_r(S^n \times S^n))$ represented by $S_i^n \times \{*\}$ and $\{*\} \times S_i^n$, then by the construction of $V_1$ and $V_2$, the isomorphism $f_* \colon H_n(\#_r(S^n \times S^n)) \to H_n(\#_r(S^n \times S^n))$ is represented by the matrix 
$$\left ( \begin{array}{cc}
I & 0 \\
A & I \end{array} \right ) \in \mathrm{Sp}(2r, \mathbb Z).$$
In the segment of the Mayer-Vietoris sequence
$$H_n(\#_r(S^n \times S^n)) \stackrel{(i_{1*}, i_{2*})}{\longrightarrow} H_n(V_1) \oplus H_n(V_2) \to H_n(M) \to 0$$
under the basis $\{e_1, \cdots , e_r, f_1, \cdots, f_r\}$ of $H_n(\#_r(S^n \times S^n)) $ and the basis $\{x_1, \cdots , x_r , \bar x_1, \cdots \bar x_r \}$ of $H_n(V_1) \oplus H_n(V_2)$ the homomorphism $(i_{1*}, i_{2*})$ is represented by the matrix
$$\left ( \begin{array}{cc}
I & 0 \\
0 & A \end{array} \right ).$$
Since $H_n(M)$ is a free abelian group of rank $r$, we have $A=0$. Therefore the induced isomorphism $f_*$ is the identity. After an isotopy we may assume $f |_{\{*\} \times S^n_i}$ is the identity, then a basis of $H_{n+1}(M)$ is represented by $S^{n+1}_i=D^{n+1}_i \cup D^{n+1}_i$ ($i=1, \cdots, r$). Furthermore, the clutching function of the restriction of the stable tangent bundle of $M$ on $S^{n+1}_i$ is in the image of $\pi_n(SO(n)) \to \pi_n(SO)$, which is $\pi_n(SO)$ when $n>7$, or is a subgroup of index $2$ when $n=3$, $7$. (For more details see \cite{Kreck78}). It is known from \cite[Lemma 1.1]{K59} that for a stable vector bundle $\xi$ over $S^{n+1}$, $p_{\frac{n+1}{4}}(\xi)$ is $a_{\frac{n+1}{4}} ((n-1)/2)!$ multiple of the clutching function of $\xi$ under the identification $H^{n+1}(S^{n+1})=\pi_n(SO)$, where $a_m=2$ when $m$ is odd, and $a_m=1$ when $m$ is even. This gives the divisibility of $p_{\frac{n+1}{4}}(M)$ as claimed below Theorem \ref{classification}.

{\footnotesize \ }

{\footnotesize \
\bibliographystyle{alpha}
\bibliography{bib}
}
\Addresses
\end{document}